\def\mytitle#1{\setcounter{equation}{0}
\setcounter{footnote}{0}
\begin{center}\Large\textbf{#1}\end{center}
\vspace{0.25cm}}
\def\myname#1{\centerline{{\large #1}}\vspace{-0.13cm}}
\newtheorem{theorem}{Theorem}[section]
\newtheorem{conjecture}[theorem]{Conjecture}
\newtheorem{corollary}[theorem]{Corollary}
\newenvironment{proof}[1][Proof]{\noindent\textbf{#1: }}{\hspace{\stretch{1}}\rule{0.5em}{0.5em}}
\begin{document}
\mytitle{Bounds on  Erd{\H{o}}s - Faber - Lov\'{a}sz Conjecture - the Uniform and Regular Cases}
\myname{Suresh Dara$^{a,b}$, S. M. Hegde$^c$}

%
%
%

\begin{abstract}
We consider the  Erd{\H{o}}s - Faber - Lov\'{a}sz (EFL) conjecture for hypergraphs. This paper gives an upper bound for the chromatic number of $r$ regular linear hypergraphs $\textbf{H}$ of size $n$. If $r \ge 4$, $\chi (\textbf{H}) \le 1.181n$ and if $r=3$, $\chi(\textbf{H}) \le 1.281n$.
\end{abstract}

\textbf{Keywords:} Hypergraphs, Chromatic number, Erd{\H{o}}s - Faber - Lov\'{a}sz Conjecture

\textbf{2010 Mathematics Subject Classification:} 05A05, 05B15, 05C15

\section{Introduction}

A \textit{hypergraph} is a structure $\textbf{H}= (V, (E_i:i \in I))$ where the vertex set $V$ is an arbitrary set, and every $E_i \subseteq V$. These sets $E_i$ are called the hyperedges of the hypergraph.

The \textit{degree} of a vertex $v$ in $\textbf{H}$ is the number of edges $d(v)$ containing $v$. The rank of an edge $E$ is the cardinality $r(e)$ of $e$.
%
A hypergraph is said to be \textit{linear} if no two hyperedges have more than one vertex in common. A hypergraph is said to be \textit{uniform} if all of its hyperedges have the same rank. If the degree of each vertex is same then the hypergraph is called \textit{regular}.  

The \textit{dual} hypergraph $\textbf{H}^*$ of a hypergraph $\textbf{H}$ is the transpose of an incident matrix of the hypergraph $\textbf{H}$. Clearly the edges of $\textbf{H}^*$ are the vertices of $\textbf{H}$ and vice versa, ranks swap with degrees, etc. The dual of a uniform hypergraph is regular and vice versa. It is easy to see that H is linear if and only if $\textbf{H}^*$ is linear.

A \textit{coloring of a hypergraph} is an assignment of colors to the vertices so that no two vertices of an edge has the same color. A \textit{$k$-coloring} of a hypergraph is a coloring of it where the number of used colors is at most $k$.

In 1972, Erd{\H{o}}s - Faber - Lov\'{a}sz (EFL) conjectured as follows:

\begin{conjecture}\label{EFL1}
If $\textbf{H}$ is a linear hypergraph consisting of $n$ edges of cardinality $n$, then it is possible to color the vertices with $n$ colors so that no two vertices with the same color are in the same edge\cite{berge1990onvizing}.
\end{conjecture}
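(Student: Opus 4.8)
The plan is to prove the exact bound $\chi(\textbf{H}) = n$ --- not merely $n + o(n)$ --- by combining the semi-random (nibble) method with absorption. First I would pass to the equivalent graph formulation: replace each hyperedge $E_i$ by a clique $K_n$ on its vertex set, obtaining a graph $G$ that is the union of $n$ copies of $K_n$, any two of which meet in at most one vertex (this is exactly linearity), and observe that a coloring of $\textbf{H}$ in which every edge is rainbow is precisely a proper coloring of $G$. Since each clique forces $\chi(G) \ge n$, the entire content is the upper bound $\chi(G) \le n$. It is convenient to view an $n$-coloring as a partition of $V(G)$ into $n$ independent sets, and an independent set of $G$ meets each clique in at most one vertex, i.e.\ it is a partial system of distinct representatives for the cliques; so I am really asking for a partition of $V(G)$ into $n$ partial transversals of the clique system.

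Next I would isolate the extremal configurations. If some vertex lies in many of the cliques, or more generally if the incidence structure is close to a projective-plane or near-pencil design, then $G$ is highly structured and can be colored with $n$ colors by a direct ad hoc argument; these are exactly the cases where $\chi = n$ is tight. After removing them I may assume the hypergraph is \emph{spread}: every vertex has bounded clique-degree and every pair of colors (equivalently cliques) has small codegree, which supplies the slack needed for a randomized argument. The degree-one vertices of $\textbf{H}$ (vertices lying in a single edge) impose only a within-edge constraint and can be colored greedily at the very end, so I would set them aside and concentrate on the vertices shared by two or more edges.

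The core is then a two-phase coloring. In the main phase I would run a R\"{o}dl-nibble / random greedy coloring on $G$: iteratively assign random admissible colors from $\{1,\dots,n\}$ to small random batches of vertices, maintaining as invariants that the partial coloring stays proper (each clique rainbow so far) and quasirandom, so that after the process only an $o(1)$-fraction of vertices remains uncolored and, at each clique, the set of still-available colors is large and evenly distributed. This is the route by which Kahn's near-optimal $n + o(n)$ bound is obtained, and the nibble analysis --- concentration of the number of available colors and uncolored vertices at each clique via Azuma-type martingale inequalities --- is technical but essentially standard.

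The decisive and hardest step is the endgame: squeezing out the final $o(n)$ so that no extra color is used. For this I would, \emph{before} running the nibble, randomly reserve an absorbing structure --- a small collection of vertices together with a Hall-type alternating-augmentation gadget built from the clique incidences --- with the robustness property that for every sufficiently small and sufficiently spread leftover set of uncolored vertices, the absorber can be reconfigured to extend the partial coloring to a complete proper coloring using only the $n$ available colors. Proving that such an absorber exists inside every spread linear hypergraph of this form, and that the nibble's leftover is always compatible with it, is the main obstacle: it requires a Hall/defect or expansion argument guaranteeing systems of distinct representatives after rerouting, and it must be made to coexist with the tight extremal cases carved off earlier. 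Combining the reserved absorber, the nibble on the remainder, the absorption of the leftover, and finally the greedy coloring of the degree-one vertices would yield a proper $n$-coloring, establishing the conjecture.
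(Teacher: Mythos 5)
You should first note a mismatch with the paper itself: the statement you were asked about is stated there as Conjecture \ref{EFL1} and is \emph{not proved} in the paper. The paper's actual theorems establish only the much weaker bounds $\chi(\textbf{H}) \le 1.181n$ for $r$-regular linear hypergraphs with $r \ge 4$ and $\chi(\textbf{H}) \le 1.281n$ for $r=3$, via an elementary greedy-coloring-with-recoloring argument and a token-counting estimate. So there is no proof in the paper for your proposal to coincide with; what you have written is instead an outline of the semi-random (nibble) plus absorption strategy by which the conjecture was eventually resolved for all sufficiently large $n$ (Kang, Kelly, K\"{u}hn, Methuku, Osthus), going well beyond Kahn's $n+o(n)$ bound \cite{kahn1992coloring} that the paper cites.

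As a proof, however, your proposal has genuine gaps at exactly the decisive points. First, the existence of the absorbing structure --- a reserved gadget that can complete \emph{every} sufficiently small, spread leftover of the nibble within the same $n$ colors --- is the entire difficulty of upgrading $n+o(n)$ to $n$, and you only assert it ("the main obstacle"); nothing in the proposal indicates how the Hall-type rerouting argument would actually be carried out, nor how spreadness of the leftover is guaranteed by the nibble in a form the absorber can use. Second, the reduction to spread instances requires classifying and separately coloring the near-extremal configurations (projective planes, near-pencils, degenerate planes), which are precisely the cases where $\chi = n$ is tight; dismissing these with "a direct ad hoc argument" conceals a substantial stability analysis, since the extremal structures must be handled compatibly with the randomized phase rather than merely removed. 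Third, every ingredient of this scheme (martingale concentration, quasirandomness maintenance, absorption) operates only for $n$ sufficiently large, so even a completed version would prove the conjecture for large $n$, leaving finitely many values unverified --- whereas the statement as quoted asserts the bound for all $n$. In short, your proposal is a correct high-level description of a known research program, but as written it is a plan rather than a proof, and it should not be represented as matching anything proved in this paper.
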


\begin{conjecture}\label{EFL2}
	Let $\textbf{H}$ be a linear hypergraph with $n$ vertices and no rank 1 edges. Then
	$q(H) \le n$ \cite{faber2016linear} .
\end{conjecture}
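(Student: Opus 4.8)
The plan is to recast the statement as a graph-colouring problem about the line graph and then to attack it by separating the boundedly many high-rank edges from the many low-rank ones, colouring the former directly and the latter by a semi-random procedure, and finally repairing the resulting deficit with an absorbing structure. Here $q(\textbf{H})$ denotes the chromatic index, i.e. the least number of colours needed to colour the edges of $\textbf{H}$ so that any two edges meeting in a vertex receive distinct colours. Since $\textbf{H}$ is linear, the edges through a fixed vertex $v$ form a clique in the line graph $L(\textbf{H})$ (whose vertices are the edges of $\textbf{H}$, two being adjacent when they share a vertex), and linearity guarantees that every adjacency of $L(\textbf{H})$ comes from exactly one common vertex. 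Hence $q(\textbf{H})=\chi(L(\textbf{H}))$, and $L(\textbf{H})$ is the union of $n$ cliques, one per vertex of $\textbf{H}$, that pairwise meet in at most one point. The target $q(\textbf{H})\le n$ is therefore the assertion that such a clique graph on $n$ cliques is $n$-colourable, which is the geometric form of the conjecture.

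First I would fix a small $\varepsilon>0$ and split the edges into \emph{large} edges, of rank $>\varepsilon n$, and \emph{small} edges, of rank $\le\varepsilon n$. A double-counting argument bounds the number of large edges by $O(1/\varepsilon)$: summing over vertices the number of large edges through each, every large edge contributes more than $\varepsilon n$ incidences, while the number of intersecting pairs is at most $\binom{m}{2}$, so by convexity $m=O(1/\varepsilon)$ once $n$ is large. Because there are so few large edges, their mutual intersection pattern is a graph of bounded size, which can be properly coloured with a reserved palette of $o(n)$ colours; moreover each small edge, meeting each large edge in at most one vertex, is forbidden only $O(1/\varepsilon)$ of these reserved colours.

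Second I would colour the small edges by the Rödl nibble / semirandom method in the spirit of Kahn. In the part of $L(\textbf{H})$ spanned by small edges the neighbourhood structure is, locally, almost a disjoint union of cliques of bounded density, so a random greedy colouring analysed by the Lov\'asz Local Lemma together with concentration inequalities uses only $(1+o(1))n$ colours and leaves an uncoloured remainder that is sparse and well spread out. This reproduces the classical asymptotic bound $q(\textbf{H})\le n+o(n)$ and is, by itself, routine given the modern toolkit.

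The hard part, and the step where I expect the genuine obstacle, is closing the $o(n)$ gap to the exact value $n$. For this I would build, before the nibble, an \emph{absorbing} reservoir: a carefully chosen family of colours together with flexible local configurations of edges able to swallow any small leftover by local recolouring without ever introducing a new colour. Guaranteeing that enough such absorbers exist, that they survive the random phase with high probability, and that the leftover after the nibble is small enough to be absorbed, is exactly the delicate balance that makes the sharp conjecture far harder than its asymptotic version; controlling the interaction between the boundedly many large edges and the absorbing structure is the crux. I would expect the exact bound to emerge, as in the recent resolution for all sufficiently large $n$, only after this absorption analysis, and I should be candid that an elementary argument valid for \emph{every} $n$ (which would yield the full conjecture outright) remains beyond these methods and is the reason the present work settles instead for the explicit constant-factor bounds stated in the abstract.
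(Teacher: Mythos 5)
You have not given a proof, and indeed the paper does not contain one either: the statement you were assigned is the Erd{\H{o}}s--Faber--Lov\'{a}sz conjecture itself, stated in the paper as an open conjecture (with a citation, not an argument). The paper's actual contribution is strictly weaker: it proves, by an elementary greedy colouring with local recolouring and a token-counting estimate in the dual (regular) setting, that $\chi(\textbf{H}) \le 1.181n$ for $r$-regular linear hypergraphs with $r \ge 4$ and $\chi(\textbf{H}) \le 1.281n$ for $r=3$. So there is no ``paper proof'' for your proposal to match, and the right question is whether your proposal itself establishes the conjecture. It does not. Your preliminary reductions are sound --- $q(\textbf{H})=\chi(L(\textbf{H}))$ for linear hypergraphs, the $O(1/\varepsilon)$ bound on edges of rank exceeding $\varepsilon n$ by double counting, and the observation that the nibble/semirandom method recovers Kahn's bound $n+o(n)$ --- but everything from that point on is a research programme rather than an argument. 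The decisive step, closing the $o(n)$ gap, is only named: you never construct the absorbing reservoir, never specify the local configurations or prove they exist in the required density, never verify that they survive the random phase, and never show the leftover is absorbable, and you explicitly flag the interaction with the large edges as an unresolved ``crux.'' Deferring the entire difficulty to an unspecified absorption analysis is a gap, not a proof sketch with a fillable hole.

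There is also a quantifier problem you concede yourself: even if the absorption programme were executed in full --- as it was in the recent resolution of the conjecture for all sufficiently large $n$ --- it would prove $q(\textbf{H}) \le n$ only for $n \ge n_0$ with an ineffective or astronomically large $n_0$, whereas the statement as written quantifies over every $n$. So the method you outline cannot, even in principle as described, close the statement; it can only reprove the large-$n$ theorem from the literature. As a point of comparison, the route the paper actually takes is deliberately elementary and valid for all $n$: it works in the dual, exploits regularity to bound edge sizes by $\lfloor (n-1)/(r-1) \rfloor$, and charges recolouring failures to pairwise edge intersections via linearity, paying for this universality with the weaker constants $1.181$ and $1.281$ in place of $1$. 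Your proposal and the paper thus sit at opposite ends of the trade-off --- asymptotic sharpness versus effective, all-$n$ bounds --- but as submitted, yours proves neither.
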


%
%

Chang and Lawler \cite{chang1988edge} presented a simple proof that the edges of a simple hypergraph on $n$ vertices can be colored with at most [1.5n-2] colors. Kahn \cite{kahn1992coloring} showed that the chromatic number of $\textbf{H}$ is at most $n+o(n)$. Faber \cite{faber2010uniformregular} proves that for fixed degree, there can be only finitely many counterexamples to EFL on this class of regular and uniform hypergraphs.

In this paper we are using the dual graph version of the Conjecture \ref{EFL2}, we gave an upper bound for the chromatic number of $r$ regular linear hypergraphs $\textbf{H}$ of size $n$. If $r \ge 4$, $\chi (\textbf{H}) \le 1.181n$ and if $r=3$, $\chi(\textbf{H}) \le 1.281n$.

\section{Results}

\begin{theorem}
	Let $H$ be a linear hypergraph of size $n$. 
	\begin{enumerate}
		\item If $H$ is $r$ regular $(r\ge 4)$ then $\chi(H) \leq 1.181n$.
		\item If $H$ is $3$ regular then $\chi(H) \leq 1.281n$.
	\end{enumerate}
	\end{theorem}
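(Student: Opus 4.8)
The plan is to pass to the 2-section (representing) graph $G=[H]_2$ on the same vertex set $V$, in which two vertices are adjacent precisely when they lie in a common hyperedge. Because $H$ is linear, each hyperedge $E_i$ induces a clique in $G$, any two of these cliques meet in at most one vertex, and a proper vertex coloring of $G$ is exactly a coloring of $H$ in the sense of the paper; thus $\chi(H)=\chi(G)$ and it suffices to bound $\chi(G)$. I would first record the two bookkeeping facts forced by $r$-regularity and linearity: counting incidences gives $r|V|=\sum_{i=1}^n|E_i|$, while counting, for each vertex, the $\binom{r}{2}$ pairs of edges through it (these pair-sets are disjoint across vertices, since two edges meet in at most one vertex) gives $\binom{r}{2}|V|\le\binom{n}{2}$. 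Together these bound $|V|$ and the total edge size, so that ``large'' edges must be scarce, while the clique number $\omega(G)=\max_i|E_i|$ already sets the unavoidable lower bound near $n$.

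The heart of the argument should be a threshold split. I fix a parameter $s$ and call $E_i$ \emph{large} if $|E_i|>s$ and \emph{small} otherwise. The incidence count bounds the number $L$ of large edges by $L<\frac{1}{s}\sum_i|E_i|\le\frac{n(n-1)}{s(r-1)}$, while every vertex lying only in small edges has $G$-degree at most $r(s-1)$. I would then color in two phases: first color the vertices incident to large edges, spending a controlled block of colors on them, and then extend greedily to the remaining vertices, reusing colors wherever possible. In the greedy phase a vertex sees forbidden colors only from its already-colored neighbors, and since it lies in exactly $r$ edges this count is governed by $r(s-1)$ together with a bounded contribution from the few large edges through it.

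The reason a naive greedy on $G$ fails, and the step I expect to be the main obstacle, is that the $G$-degree of a vertex can be as large as $\sum_{E\ni v}(|E|-1)$, which is linear in the (possibly large) edge sizes and can far exceed $n$; everything therefore hinges on using $r$-regularity to show that the vertices responsible for large degrees — those in large edges — are few enough to be handled cheaply, and that once they are removed the residual graph is sparse. Concretely, the delicate point is to bound, for a vertex $v$ introduced during the greedy phase, the number of its neighbors already colored, uniformly over the worst-case distribution of edge sizes consistent with $r|V|=\sum_i|E_i|$ and $\binom{r}{2}|V|\le\binom{n}{2}$.

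Finally, I would optimize the threshold $s$: choosing $s$ to balance the colors spent on the large edges against the $\approx r(s-1)$ colors the greedy phase can require should yield a bound of the form $n\cdot\bigl(1+f(r)\bigr)$. Substituting $r\ge 4$ and then $r=3$ into this optimized expression should produce the two stated constants $1.181$ and $1.281$, with the $r=3$ case coming out worse because the factor $r-1$ controlling $L$ is smallest there, forcing a larger threshold and hence more slack over the conjectured value $n$.
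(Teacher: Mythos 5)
Your setup (passing to the 2-section graph and the two counting facts) matches the paper's starting point, but the threshold split that carries your whole plan is vacuous precisely in the regular case. Regularity plus linearity already force \emph{every} edge to satisfy $|E_i|\le\lfloor\frac{n-1}{r-1}\rfloor$: each vertex of $E_i$ lies on $r-1$ further edges, and by linearity these bundles are disjoint across the vertices of $E_i$, so $|E_i|(r-1)\le n-1$. Meanwhile your bound on the number of large edges, $L\le\frac{1}{s}\sum_i|E_i|\le\frac{n(n-1)}{s(r-1)}$, is below the trivial bound $n$ only when $s>\frac{n-1}{r-1}$, i.e.\ only when the threshold already exceeds the maximum possible edge size. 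So for every meaningful choice of $s$ the counting permits \emph{all} $n$ edges to be large, and this is not just a slack in the estimate: the dual of a Steiner triple system is a $3$-regular linear hypergraph with $n$ edges all of size exactly $\frac{n-1}{2}$ (and duals of higher Steiner systems do the same for larger $r$). On such instances your phase 2 is empty, phase 1 is the original problem, and no coloring scheme is specified for it; pure greedy on the 2-section graph only guarantees $\Delta(G)+1\approx\frac{r}{r-1}n$, which is $1.5n$ for $r=3$ and about $1.33n$ for $r=4$ --- both above the claimed bounds. The obstacle you flag at the end (bounding the already-colored neighbors of a greedy vertex) is exactly where this breaks, and no ordering or threshold tuning within a one-pass greedy framework can repair it.

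The missing idea, which is the core of the paper's proof, is a \emph{recoloring} step together with a token-counting argument. The paper colors vertices greedily, but when a vertex $v_i$ has no free color it looks for a color $c$ such that every neighbor of $v_i$ colored $c$ (at most $r$ of them, one per edge through $v_i$) can itself be moved to some color unused in its own $r$ edges; then those neighbors are recolored and $v_i$ gets $c$. If this augmentation ever fails with a palette of $\alpha n$ colors, one extracts a set $S$ of $\alpha n$ ``stuck'' vertices, each of which sees all $\alpha n$ colors, and double counts tokens: each $v\in S$ places at least $\alpha n-\lfloor\frac{n-1}{r-1}\rfloor$ tokens, while linearity and $r$-regularity cap the number of tokens landing on any single vertex at $r(r-1)$. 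With $|V|\le\frac{n}{r}\lfloor\frac{n-1}{r-1}\rfloor$ this yields the quadratic inequality $\alpha^2 n-\alpha A-(r-1)A\le 0$ with $A=\lfloor\frac{n-1}{r-1}\rfloor$, so the procedure cannot abort once $\alpha$ exceeds the positive root, which is at most $\frac{B+\sqrt{B^2+4}}{2}$ with $B=\frac{1}{r-1}$; evaluating at $r\ge 4$ and $r=3$ gives $1.181$ and $1.281$. It is this ability to \emph{undo} earlier choices, justified by a global counting contradiction, that lets the argument beat the greedy ceiling $\frac{r}{r-1}n$; your proposal has no mechanism playing that role.
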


\begin{proof}
	Let $H= (V,E)$ be a linear hypergraph of size $n$ and $H$ is $r$ regular $(r\ge 3)$. Let $E_1, E_2, \dots E_n$ be the edges of $H$. Since $H$ is $r$ regular and $|E| = n$, for every $E_i \in E$, $|E_i| \leq  \left \lfloor {\frac{n-1}{r-1}} \right \rfloor \leq \frac{n-1}{2}$, then $|V| \leq \frac{n(n-1)}{r(r-1)} \leq  \frac{n}{3}\left \lfloor {\frac{n-1}{r-1}} \right \rfloor = N (say)$. Let $C=\{1, 2, 3, \dots , \alpha n\}$ be the set of available colors. Since degree of each vertex is $r$, for every vertex $v_i \in V$, there exists $r$ edges  $E_{i_1}, E_{i_2}, \dots  E_{i_r}$ which are incident at the vertex $v_i$.
	
	For $i=1$ to $N$ do
	
	Color $v_i$ with smallest color not already seen in $\cup_j E_{i_j}$.
	
	If no such color, then find a color $c$ such that
	
	$\forall$ $v_k \in \cup_j E_{i_j}$ with $col(v_k) = c$ (at most $r$ such $v_k$)
	
	$\exists$ colors $c_k \notin$ colors assigned to $\cup_{j=1,2,\dots,r}E_{k_j}$.
	
	Recolor each such $v_k$ with $c_k$ and color $v_i$ with $c$.
	
	If no such $c$ found, abort.
	
	Claim: For $\alpha$ sufficiently large, $\alpha \geq 1$, procedure dose not abort.
	
	Suppose procedure aborts at $v_i$. That means all $\alpha n$ colors seen in $\cup_j E_{i_j} \setminus \{v_i\}$ at vertices that cannot be recolored. Pick one such vertex for each color, set $S$. $|S| = \alpha n$. For vertices $v\in S$, $v$ sees all $\alpha n$ colors. At most $|E_{i_j}|$ of these are seen in the edge $E_{I_j} \ni v$. The rest are seen outside $E_{i_j}$. $v$ places a token on each vertex it sees colored with a color not in $E_{i_j}$.
	
	$v$ places $\geq \alpha n - |E_{i_j}|$ tokens.
	
	So total number of tokens places $\geq |S|.(\alpha n - |E_{i_j}|)$ $ \geq \alpha n (\alpha n -  \left \lfloor {\frac{n-1}{r-1}} \right \rfloor)$.
	
	How many tokens can be placed on a single vertex $u$?
	
	If $u \in \cup_j E_{I_j}$, then say $u \in E_{i_1}$. By $H$ is $r$ regular, $u$ belongs to $r-1$ other edges $E'_1, E'_2, \dots, E'_{r-1}$. By linearity property, at most one vertex each on $E_{i_2}, E_{i_3}, \dots, E_{i_{r}}$ lies on $E'_1$, and same for $E'_2$and so on. So, at most $(r-1)^2$ vertices of $S$ can place a token on $u$.
	
	If $u \notin  \cup_j E_{I_j}$, then $col(u)$ is seen in $S$, say on $ S \cap E_{i_1}$. then no vertex in $S \cap E_{i_1}$ will place a token on $u$. The $r$ edges through $u$ intersects $E_{i_2}, E_{i_3}, \dots, E_{i_r}$ in at most one vertex. So, there are at most $r(r-1)$ vertices of $S$ can place a token on $u$. Therefore number of token per vertex is at most $r(r-1)$.
	
	$|S|$. (minimum number of token placed by $v \in S$) $\leq$ (number of token) $\leq $ (number of vertices) (number of tokens per vertex) 
	
	$|S| (\alpha n - |E_{I_j}|) \leq $ (number of tokens) $\leq \frac{n}{r}  \left \lfloor {\frac{n-1}{r-1}} \right \rfloor r(r-1)$
	
	$\alpha n (\alpha n -  \left \lfloor {\frac{n-1}{r-1}} \right \rfloor) \leq n  \left \lfloor {\frac{n-1}{r-1}} \right \rfloor (r-1)$
	
	$\alpha^2 n - \alpha  \left \lfloor {\frac{n-1}{r-1}} \right \rfloor - (r-1)  \left \lfloor {\frac{n-1}{r-1}} \right \rfloor \leq 0$.
	
	Let $A= \left \lfloor {\frac{n-1}{r-1}} \right \rfloor$, then the above inequality is $ \alpha^2 n - \alpha A - (r-1)A \leq 0$.
	
	Consider $\alpha^2 n - \alpha A - (r-1)A = P(\alpha)$, then the zeros are $\frac{A \pm \sqrt{A^2 + 4(r-1)An}}{2n}$. $P(0) = -(r-1)A <0$, so for all $\alpha > \beta = \frac{A + \sqrt{A^2 + 4(r-1)An}}{2n}$, $P(\alpha) >0$.
	
	Choosing $\alpha$ so that $\alpha > \beta$, we conclude that with $\alpha n$ colors the procedure successfully colors $V$.
	
	The value of $\beta$:
	
	$\beta = \frac{A + \sqrt{A^2 + 4(r-1)An}}{2n}$ where $A = \left \lfloor {\frac{n-1}{r-1}} \right \rfloor \leq \frac{n}{r-1}= nB$, where $B=\frac{1}{r-1}$.
	
	$\leq  \frac{nB + \sqrt{n^2B^2 + 4n^2}}{2n}$ 
	
	$= \frac{B + \sqrt{B^2+4}}{2}$
	
	{\bf{Case 1:}} If $r\ge4$
	
	$\beta  \le \frac{B + \sqrt{B^2+4}}{2} \le 1.180460\dots < 1.181$ \quad ($B=\frac{1}{r-1}$)

Then for every $\alpha > \beta $, $\chi(H)\le \alpha n$. Therefore $\chi(H) \le 1.181n$.

	{\bf{Case 2:}} If $r = 3$

$\beta  \le \frac{B + \sqrt{B^2+4}}{2} $

$= \frac{1}{4} + \sqrt{\frac{1}{16}+1}$ \quad ($B=\frac{1}{r-1} = \frac{1}{2}$)

$=\sqrt{\frac{17}{16}} + 0.25$

$=\sqrt{1.0625} + 0.25$

$= 1.0307764 \cdots + 0.25 <1.281$.

Therefore $\chi(H) \le 1.281n$.

\end{proof}

From the above theorem one can observe that the upper bound of chromatic number of $r$ regular linear hypergraphs $\textbf{H}$ of size $n$ is depends on the value of $\beta$, where 	$\beta  \le \frac{B + \sqrt{B^2+4}}{2}$ \quad ($B=\frac{1}{r-1}$). Clearly as $r$ increases the value of $\beta$ will decrease. Therefore if $r\ge 5$, then the upper bound of $\chi (\textbf{H})$ is even smaller than $1.181n$.

\begin{theorem}
	Let $H$ be a linear uniform hypergraph of size $n$. If $\Delta (H) \geq \frac{n}{2}$ then $\chi(H) \leq 1.25n$.
\end{theorem}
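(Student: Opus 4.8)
The plan is to reuse the greedy-colouring-with-one-recolouring framework of the preceding theorem, the extra hypothesis $\Delta(H)\ge \tfrac n2$ being precisely what should upgrade the constant from a $1.5n$-type bound down to $1.25n$. Write $H$ as a $k$-uniform linear hypergraph with edges $E_1,\dots,E_n$, and let $v_0$ be a vertex of maximum degree $\Delta\ge \tfrac n2$. \textbf{Step 1 (a bound on the edge size).} I would first record the linearity identity: for any edge $E$, the edges meeting $E$ are pairwise distinct, so $\sum_{u\in E}(d(u)-1)\le n-1$. Applied to an edge $E\ni v_0$, the single term coming from $v_0$ is already $\Delta-1\ge \tfrac n2-1$, hence $\sum_{u\in E,\,u\ne v_0}(d(u)-1)\le n-\Delta\le \tfrac n2$. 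In the dual setting of Conjecture \ref{EFL2} there are no rank-$1$ edges, i.e.\ no degree-$1$ vertices, so each of the remaining $k-1$ terms is at least $1$, forcing $k\le \tfrac n2+1$. This inequality plays exactly the role that $A=\lfloor (n-1)/(r-1)\rfloor$ played in the regular case: it is the quantity that will control the final constant.

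\textbf{Step 2 (the colouring procedure).} With palette $C=\{1,\dots,\alpha n\}$, I would process the vertices in order, colouring each $v_i$ with the least colour not already appearing on the edges through $v_i$; when no such colour exists I would attempt the same single Kempe-type swap as before, recolouring a blocking neighbour $v_k$ by a colour free on all edges through $v_k$ and then assigning its old colour to $v_i$. The claim to establish is that for $\alpha\ge 1.25$ the procedure never aborts.

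\textbf{Step 3 (abort analysis).} Suppose the procedure aborts at $v_i$. Then all $\alpha n$ colours occur on non-recolourable neighbours of $v_i$; choosing one such vertex per colour gives a set $S$ with $|S|=\alpha n$. Each $v\in S$ sees all $\alpha n$ colours and places a token on every neighbour whose colour is not used inside $v$'s own edge, hence at least $\alpha n-k$ tokens, for a total of at least $\alpha n(\alpha n-k)$. Bounding the number of tokens a single vertex can receive by means of linearity and dividing through by $n$, I expect a quadratic inequality of the shape $\alpha^2 n-\alpha\,k-c\,k\le 0$ for a modest constant $c$; substituting $k\le \tfrac n2+1$ and solving, the relevant positive root should fall below $1.25$, which yields $\chi(H)\le 1.25n$.

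\textbf{The main obstacle.} The delicate point is the per-vertex token bound in Step~3. In the regular case, regularity made this a clean $r(r-1)$; here degrees range up to $\Delta\approx \tfrac n2$, and the naive pair count through a token-receiving vertex is of order $\Delta^2\sim n^2/4$, which is far too weak. The resolution must exploit the hypothesis directly: since $v_0$ lies in at least $\tfrac n2$ edges, at most $\tfrac n2$ edges avoid $v_0$, and each such ``far'' edge meets the star at $v_0$ in at most one vertex per edge. This is the mechanism that should cap the number of colours beyond $n$ at roughly $\tfrac n4$. Turning ``at most $\tfrac n2$ far edges'' into ``at most $\tfrac n4$ extra colours'' — whether inside the token count or via a separate two-phase colouring that colours the star at $v_0$ first and then repairs conflicts along the far edges — is where the real work lies, and it is also where one must verify that the degenerate high-degree configurations (which would otherwise break the bound) are exactly the ones ruled out by the no-degree-$1$-vertex hypothesis used in Step~1.
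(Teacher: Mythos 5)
Your Step 1 is correct: under the no-degree-$1$-vertex convention inherited from the dual of Conjecture \ref{EFL2}, linearity gives $\sum_{u\in E}(d(u)-1)\le n-1$ for any edge $E$, and feeding in $d(v_0)\ge \frac n2$ forces the common edge size $k\le \frac n2 +1$. But the proof as a whole has a genuine gap, and it sits exactly where you say it does: Step 3 is not an argument, it is a wish. The token method of the regular case lives or dies on the per-vertex token bound, and that bound came entirely from regularity: every vertex of $S$ and every token-receiving vertex had exactly $r$ edges through it, so linearity capped the incidences at $(r-1)^2$ or $r(r-1)$. Here the degrees of the vertices in $S$ and of the token receivers can be as large as $\Delta\approx \frac n2$, the same count gives order $n^2$ tokens per vertex, and the hoped-for inequality $\alpha^2 n-\alpha k-ck\le 0$ with ``a modest constant $c$'' never materializes; no value of $\alpha$ anywhere near $1.25$ comes out. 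Since everything after Step 2 is conditional on that inequality, the proposal proves nothing, and your own closing paragraph concedes that the real work is still to be done.

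The missing idea --- and it is the heart of the paper's proof --- is to abandon the token count and instead colour the vertices greedily in non-increasing order of degree, using linearity in the opposite direction: an edge can contain at most $\lfloor (n-1)/(k-1)\rfloor$ vertices of degree at least $k$, because the edges (other than the common one) through such vertices are pairwise distinct. Hence when a vertex of degree $k$ is coloured, each of its $k$ edges holds at most $\lfloor (n-1)/(k-1)\rfloor -1$ previously coloured vertices, so at most $k\bigl(\lfloor (n-1)/(k-1)\rfloor -1\bigr)<1.25n$ colours are forbidden once $k\ge 5$: in this ordering, high degree is an asset rather than an obstacle. The hypothesis $\Delta(H)\ge \frac n2$ is spent only on the residual degrees $d=4,3,2$, where the paper splits the degree-$d$ vertices into those off the star of a maximum-degree vertex ($A_d$) and those on it ($B_d$), colours $A_d$ first (their edges lie among the at most $\frac n2$ non-star edges, which tightens the count to roughly $\frac{n/2}{d-1}$), then $B_d$, and finishes the degree-$2$ case with an explicit colour exchange between the two edges through the vertex, possibly routed through a third edge $E_k$. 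Your closing remark about a two-phase colouring built around the star at $v_0$ gestures in roughly this direction, but without the degree-ordered greedy and the $\lfloor (n-1)/(k-1)\rfloor$ count there is no mechanism at all for the vertices of degree between $5$ and $\frac n2$, so the gap is not a detail that could be filled routinely from what you have written.
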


\begin{proof}
	Let $H = (V,\ \{E_1, E_2, \dots E_n\})$ be a linear uniform hypergraph of size $n$ and  $\Delta (H) \geq \frac{n}{2}$. Let $v$ be a maximum degree vertex, $\delta(v)=m$ and $v$ is incident with the edges $E_1,E_2, \dots E_m$.  In any $E_i$ there are at most $\left \lfloor \frac{n-1}{d-1}  \right \rfloor$ vertices of degree $d$. Arrange the vertices of $H$ in non increasing order of degree. We will color the vertices in this order, using $[1.25n]$ colors. Assume we next color a vertex $v$ of degree $k \ge 4$. At this point only vertices of degree $k$ or greater have been assigned colors. At this stage in each edge incident with vertex $v$  there are at most $\left \lfloor \frac{n-1}{k-1} -1 \right \rfloor$ vertices have been colored, which implies there are at most $k\left \lfloor \frac{n-1}{k-1} -1 \right \rfloor < [1.25n]$ colors are used to the vertices incident to the edges which are incident to $v$. That means there will be an unused color for $v$. For vertices of degree 4, 3 and 2 we apply the following method.

	Partition the vertices of degree $d$ into two sets $A_d$ and $B_d$, where $A_d$ be the set of all degree $d$ vertices which are not incident with any of the edges $E_1, E_2, \dots, E_m$ and $B_d$ be the remaining degree $d$ vertices. First assign the colors to the vertics are in $A_d$, at this stage in any edge there are at most $\left \lfloor \frac{{n/2}}{d-1} -1 \right \rfloor$ vertices have been colored for $d=4,\ 3,\ 2$. Therefore always there is a free color for assigning to the vertices of $A_d$. For $d= 4, \ 3$, while assigning colors to the vertix $u$ from $B_d$, there are $(d-1)\left \lfloor \frac{n-1}{d-1} -1 \right \rfloor + \left \lfloor \frac{n/2}{d-1} -1 \right \rfloor < [1.25n]$ vertices have been colored. Therefore always there is a free color for assigning to the vertices of $B_d$. For $d=2$, let $u \in B_2$ be the vertex we have to color. Let $u \in E_i, E_j$ for some $i,j$, then the vertex $v$ is in either $E_i$ or $E_j$ but not both. Assume $v$ is in $E_i$, then in $E_i$ there are at most $\frac{n}{2} -1$ vertices have been colored, that means there are at least $\frac{3n}{4}+1$ colors are free from $E_i$. $E_j$ has at most $n-2$ vertices have been colored and it has at least $\frac{n}{4} +2$ colors free from $E_j$. Let $X$ be the set of free colors from $E_i$ and $Y$ be the set of free colors from $E_j$. If $E_i \cap E_j \neq \emptyset$ there is a free color to assign to the vertex $u$. If not, there exist two degree two vertices $p,q$ such that $p \in E_i$, $q\in E_j$ and $p,q \in E_k$ for some $k$ and color of $p$ in $Y$, color of $q$ in $X$. Sincethe  sum of number of free colors from $E_i$ and number of free colors from $E_j$ is $>n$ and the number of vertices in $E_k$ is at most $n-1$, we can make either color of $p$ or color $q$ be free and this color to be assigned to $u$. if such $E_k$ is not available then the number of vertices colored in $E_i \cup E_j < 1.25n$ .
\end{proof}

\begin{corollary}
	Let $H$ be a linear uniform hypergraph of size $n$. If $H$ has at least $\frac{n}{2}$ independent edges then $\chi(H) \leq 1.25n$.
\end{corollary}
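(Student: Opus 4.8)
The plan is to re-run the colouring argument of the preceding theorem almost verbatim, using a family of $m\ge n/2$ pairwise disjoint (independent) edges $F_1,\dots,F_m$ in place of the star $E_1,\dots,E_m$ of edges through the maximum-degree vertex $v$. The reason this substitution is natural is that the star entered that proof essentially through a single structural fact: a degree-$2$ vertex can lie on at most one distinguished edge. For the star this required linearity (two star edges meet only at $v$); for a matching it is immediate and in fact stronger, since the $F_i$ are pairwise disjoint and \emph{no} vertex lies on two of them.

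First I would colour the vertices of large degree exactly as before. Ordering the vertices by non-increasing degree and colouring with $[1.25n]$ colours, the bound that each edge contains at most $\lfloor\frac{n-1}{d-1}\rfloor$ vertices of degree $d$ uses only linearity and $|E|=n$, so when a vertex of degree $k$ is reached only vertices of degree $\ge k$ are coloured and at most $k\lfloor\frac{n-1}{k-1}-1\rfloor$ colours meet it; for $k$ large this stays below $[1.25n]$. This step never mentions the star, so it transfers without change and reduces the problem to the vertices of degree $2,3,4$.

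For these low-degree vertices I would partition the degree-$d$ vertices into $A_d$, those lying on no $F_i$, and $B_d$, those lying on some $F_i$, colouring $A_d$ before $B_d$ as in the theorem. Degrees $3$ and $4$ should then follow from the same direct count. The decisive case is $d=2$: for $u\in B_2$ with $u\in E_i$ and $u\in E_j$, disjointness of the matching forces exactly one of $E_i,E_j$, say $E_i=F_\ell$, to be a matching edge, and I would reproduce the swap --- bound the colours already used on $E_i$ and on $E_j$, observe that the numbers of free colours on $E_i$ and on $E_j$ sum to more than $n$ while the edge $E_k$ through the two conflicting degree-$2$ vertices $p,q$ has at most $n-1$ vertices, and conclude that $\mathrm{col}(p)$ or $\mathrm{col}(q)$ can be freed and reassigned to $u$.

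The main obstacle is recalibrating these counts without the maximum-degree vertex. In the theorem every ``$n/2$'' (for instance ``$E_i$ has at most $\frac n2-1$ coloured vertices'' and ``at most $\lfloor\frac{n/2}{d-1}-1\rfloor$ coloured vertices per edge while colouring $A_d$'') came from $E_i$ or the excluded edges being tied to the degree-$\ge n/2$ vertex $v$; with a matching there is no such $v$, so each of these inequalities must instead be produced from the matching. The useful input is that $m\ge n/2$ disjoint edges of the common rank $k$ cover $mk\ge nk/2$ vertices, whence $|V|\ge nk/2$ and the average degree $nk/|V|\le 2$, and that at most $n-m\le n/2$ edges lie outside the matching, so each $F_\ell$ meets at most $n/2$ other edges and is only lightly coloured when the degree-$2$ stage begins. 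Converting these global facts into the precise ``more than $3n/4$ free colours on $F_\ell$'' and ``more than $n$ free colours in total'' inequalities --- so that the swap of the theorem applies verbatim --- is the only real work; everything else is a transcription of the preceding proof.
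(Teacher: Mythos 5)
The paper states this corollary with no proof at all, leaving it as an immediate consequence of the preceding theorem's method, and your proposal supplies exactly that argument: re-run the star proof with the matching $F_1,\dots,F_m$, using that each vertex lies on at most one $F_i$ (disjointness replacing linearity of the star) and that at most $n-m\le n/2$ edges lie outside the matching, which is the same mechanism that produced every ``$n/2$'' count in the theorem. Your transfer points (the $A_d$/$B_d$ split, the degree-$3,4$ counts, and the $d=2$ swap where exactly one of $E_i,E_j$ is a matching edge) are the right ones, and if anything the counts come out slightly cleaner here, since a matching edge contains no analogue of the already-coloured maximum-degree vertex $v$.
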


%

\bibliographystyle{plain}
\bibliography{EFL1.25}

\end{document}